 \newtheorem{theorem}{Theorem}[section]
\newtheorem{remark}{Remark}[section]
\newtheorem{corollary}{Corollary}[section]
  \newtheorem{definition}{Definition}[section]
   \newtheorem{lemma}{Lemma}[section]
    \newtheorem{example}{Example}[section]
\title{Weak convergence of the iterations for asymptotically $G$-nonexpansive maps on Banach spaces with a graph}
\author{Asrifa Sultana\footnotemark[1]}
\date{}
\begin{document}

\maketitle
\footnotetext[1]{
Department of Mathematics, Indian Institute of Technology Bhilai, Raipur - 492015, India\\

}
\begin{abstract}
We have derived that on certain Banach spaces having a graph structure $G$, the iterations for asymptotically $G$-nonexpansive map will converge weakly towards a fixed point. This result unifies and extends several theorems on fixed points proved by various authors for class of nonexpansive and asymptotically nonexpansive maps. As an application of this result, we derive that for maps satisfying the
nonexpansive condition locally on special Banach spaces, the successive approximations converge weakly towards a fixed point.
\end{abstract}
{\bf keyword:}
Fixed point, graph structure, uniformly convex Banach space, weak convergence, asymptotically $G$-nonexpansive

\section{Introduction}
In $1967$, Z. Opial \cite {opial} shows that on certain Banach spaces, the successive approximations of nonexpansive maps will converge weakly towards a fixed point.  Later in $1978$, S. C. Bose \cite{bose} extended this mentioned result for class of asymptotically nonexpansive maps. Assume that $(X,||.||)$ is a Banach space and $K\subseteq X$ is non-empty. A map $T:K\rightarrow K$ is known to be asymptotically nonexpansive \cite{goebel} if for any $n \geq 1$ and for all $x,y\in X$,
$$||T^n(x)-T^n(y)||\leq k_n||x-y||,$$
 where the real sequence $\{k_n\}_n$ fulfills $\displaystyle{\lim_{n \to \infty}k_n=1}$. Goebel and Kirk \cite{goebel} originated the notion of such wider class of nonexpansive maps and established the presence of a fixed point for such maps on uniformly convex spaces. In \cite{goebel}, they have also illustrated that asymptotically nonexpansive map need not be nonexpansive with an example. The map $T$ is known as asymptotically regular \cite{bose} when for each $x \in K$, $T^{n}x-T^{n+1}x $ tends to $0$ as $n\rightarrow \infty$. The result for ensuring the convergence of successive iterations for asymptotically nonexpansive maps towards a fixed point due to Bose \cite{bose} is stated below.
 \begin{theorem}\cite{bose}\label{theorembose}
  Suppose that the Banach space $(X,||.||)$ is uniformly convex and endowed with weakly continuous duality map. Assume that $T:K\rightarrow K$ is asymptotically nonexpansive where $K\subseteq X$ is non-empty closed convex and bounded. Then for each $x \in X$, $\{T^{n}(x)\}$ will converge weakly towards a fixed point if $T$ is asymptotically regular.
 \end{theorem}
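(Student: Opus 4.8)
The plan is to combine a ``distance to a fixed point converges'' statement with a demiclosedness principle and Opial's condition. First I would record the standing structural facts. Uniform convexity yields reflexivity, so the bounded closed convex set $K$ is weakly compact; every sequence in $K$ then has a weakly convergent subsequence with limit in $K$, and by the Goebel--Kirk theorem $T$ possesses a fixed point, i.e. $F(T)\neq\emptyset$. I would also invoke the fact that a Banach space carrying a weakly continuous duality map satisfies Opial's condition: whenever $x_n\rightharpoonup z$, one has $\liminf_n\|x_n-z\|<\liminf_n\|x_n-w\|$ for every $w\neq z$.

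Second, I would prove that for each $p\in F(T)$ the limit $\lim_n\|T^n x-p\|$ exists. Writing $a_n=\|T^n x-p\|$ and using $T^m p=p$ together with the asymptotically nonexpansive inequality gives $a_{n+m}=\|T^m(T^n x)-T^m p\|\le k_m a_n$. Fixing $n$ and letting $m\to\infty$, the left side has $\limsup$ equal to $\limsup_j a_j$ while the right side tends to $a_n$ since $k_m\to 1$; hence $\limsup_j a_j\le a_n$ for all $n$, so $\limsup_j a_j\le\inf_n a_n\le\liminf_j a_j$ and the limit exists. Boundedness of $K$ guarantees $a_n$ stays finite, which is exactly what this argument needs.

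Third, I would show every weak subsequential limit of $y_n:=T^n x$ lies in $F(T)$. Asymptotic regularity gives $\|y_n-Ty_n\|=\|T^n x-T^{n+1}x\|\to 0$, and for each fixed $m$ the telescoping bound $\|y_n-T^m y_n\|\le\sum_{i=0}^{m-1}\|T^{n+i}x-T^{n+i+1}x\|\to 0$ holds. If $y_{n_k}\rightharpoonup z$, I would combine $\|y_{n_k}-T^m y_{n_k}\|\to 0$ with $\|T^m y_{n_k}-T^m z\|\le k_m\|y_{n_k}-z\|$ to estimate the asymptotic center functional $r(y)=\limsup_k\|y_{n_k}-y\|$, obtaining $r(T^m z)\le k_m r(z)$; uniqueness of the asymptotic center in a uniformly convex space then forces $Tz=z$. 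I expect this demiclosedness of $I-T$ at $0$ to be the main obstacle, since it is the single place where uniform convexity is genuinely used.

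Finally I would settle uniqueness of the weak cluster point. Suppose $y_{n_k}\rightharpoonup p$ and $y_{m_j}\rightharpoonup q$ with $p,q\in F(T)$ and $p\neq q$. By the second step both $\lim_n\|y_n-p\|$ and $\lim_n\|y_n-q\|$ exist, so every subsequential $\liminf$ equals the corresponding full limit. Applying Opial's condition along $\{n_k\}$ yields $\lim_n\|y_n-p\|<\lim_n\|y_n-q\|$, while applying it along $\{m_j\}$ yields the reverse strict inequality, a contradiction. Hence all weak cluster points of $\{y_n\}$ coincide, and since $\{y_n\}$ lies in the weakly compact set $K$, the full sequence $T^n x$ converges weakly to this unique fixed point.
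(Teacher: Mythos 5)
Your proposal is correct, and its core --- showing that every weak cluster point $z$ of $\{T^n x\}$ is a fixed point via the estimate $r(T^m z)\le k_m r(z)$ for the asymptotic-center functional of the subsequence, then strong convergence of the minimizing sequence $\{T^m z\}$ to $z$ --- is exactly the engine of the paper's proof (the paper proves the theorem as the special case $E(G)=X\times X$ of its Theorem~\ref{Th1_main}, using Lemma~\ref{lemma1} and Lemma~\ref{lemma2} in the same way). Where you genuinely diverge is the final step, the uniqueness of the weak cluster point. The paper shows that each weak cluster point $\bar{x}$ realizes the asymptotic radius $\rho'$ of the \emph{full} sequence $\{T^n x\}$ (via the estimate $\|\bar{x}-T^{n_N+j}x\|\le k_j(\rho+\epsilon/2)$), so all cluster points coincide with the unique asymptotic center of the full sequence; you instead establish Fej\'er-type monotonicity ($\lim_n\|T^n x-p\|$ exists for every fixed point $p$) and then run the classical Opial two-subsequence contradiction. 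Both are valid; your route isolates the role of the weakly continuous duality map cleanly (it enters only through Opial's condition and through identifying the weak limit as the asymptotic center), and your Fej\'er step is arguably more transparent than the paper's $\epsilon$-chase, while the paper's route avoids needing the existence-of-limit lemma at all. Two small points you should tighten if you write this up: in the demiclosedness step, the inference from $r(T^m z)\le k_m r(z)$ to $Tz=z$ needs you to first record that $r(z)=\rho$ (this is where Lemma~\ref{lemma2}, i.e.\ the duality map hypothesis, is used) so that $\{T^m z\}$ is a minimizing sequence converging strongly to $z$, and then to use the Lipschitz continuity of $T$ (with constant $k_1$) to pass from $T^{m}z\to z$ to $Tz=z$; and since Opial's condition is usually stated with $\liminf$, note explicitly that the existence of the full limits $\lim_n\|T^nx-p\|$ and $\lim_n\|T^nx-q\|$ is what lets you replace subsequential $\liminf$'s by those limits.
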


 In 2008, Jachymski \cite{Jach} first obtained an analogue of the Banach's contraction principle on metric spaces having a graph structure. In this case, the contractive condition on the map need to satisfy only for those pair of elements that are lies in the set of edges of the associate graph. This result also unifies the fixed point theorem for uniformly local contraction proved by Edelstein \cite{edel} and the theorem for monotone contractive mappings on metric spaces endowed with partial ordering due to Ran and Reurings \cite{ran}.

 Our aim in this article is to illustrate that on certain Banach spaces having a graph structure $G$, the successive approximations of asymptotically $G$-nonexpansive map will converge weakly towards a fixed point. In particular,  our result extends and unifies the above mentioned Theorem \ref{theorembose} for asymptotically nonexpansive maps proved by Bose \cite{bose}. As an application, we have deduced that any uniformly locally nonexpansive map on uniformly convex Banach spaces fulfills the characteristic that the successive approximations converge weakly towards a fixed point.  Furthermore, we have concluded that on special Banach spaces endowed with partial ordering, the weak limit of iterations for monotone asymptotically nonexpansive map eventually becomes a fixed point.

\section{Preliminaries}
Suppose that $(X,||.||)$ is a Banach space. The space $X$ is called uniformly convex \cite{clarkson} if for any $0\leq \epsilon<2$, we have a positive $\delta$ so that for any $a,b$ in $X$ with $||a||\leq 1,\,\, ||b||\leq 1$ and $||a-b||\geq \epsilon $, one has $||(a+b)/2||\leq (1-\delta)$.
Note that all finite dimensional Euclidean spaces $\mathbb{R}^{n}$ as well as each Hilbert spaces are typically uniformly convex. Further, the spaces $l_{p}$ and $L_p$ are also uniformly convex \cite{clarkson} for $p\in (1,\infty)$. We now recall the below stated lemma due to Alfuraidan-Khamsi \cite{khamsi} that will be utilized in main section. The lemma is originated by M. Edelstein \cite{edel2}.

\begin{lemma}\cite{khamsi,edel2}\label{lemma1}
Let the Banach space $(X,||.||)$ be uniformly convex and $K\subseteq X$ be non-empty closed and convex. For any bounded sequence $\{x_{n}\}$,
the function $r:K \rightarrow [0,\infty)$ defined as
  $$r(y)=\limsup_{n}||x_{n}-y||,\quad y \in K$$
has a unique minimum $z$ in $K$, that is, $r(z)=\inf\{r(y): y \in K\}=\rho$.   Moreover, if $\{z_{n}\}$ is a minimizing sequence in $K$, that is,
$\lim_{n \rightarrow \infty}r(z_{n})=\rho$, then $\{z_{n}\}$ converges strongly to $z$.
\end{lemma}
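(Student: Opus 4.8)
The plan is to derive all three assertions — existence of the minimizer, its uniqueness, and strong convergence of minimizing sequences — from one geometric fact: \emph{every} minimizing sequence is Cauchy. First I would record the elementary properties of $r$. Since $\{x_n\}$ is bounded, $r(y)\le \sup_n\|x_n\|+\|y\|<\infty$ for each $y$, so $r$ is finite-valued and $\rho=\inf_{y\in K}r(y)\ge 0$ is well defined. Moreover, for each $n$ the map $y\mapsto\|x_n-y\|$ is convex and $1$-Lipschitz, and both properties pass to the $\limsup$; hence $r$ is convex and satisfies $|r(y)-r(y')|\le\|y-y'\|$, so in particular $r$ is continuous.

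The heart of the argument is the claim that a minimizing sequence $\{z_k\}$ is Cauchy, which I would prove by contradiction. If it is not, there exist $\epsilon>0$ and indices with $\|z_{n_k}-z_{m_k}\|\ge\epsilon$ along a subsequence. Fix a small $\eta>0$. For $k$ large, $r(z_{n_k}),r(z_{m_k})<\rho+\eta$, so for each such $k$ there is $N_k$ with $\|x_j-z_{n_k}\|<\rho+\eta$ and $\|x_j-z_{m_k}\|<\rho+\eta$ for all $j\ge N_k$. Put $w_k=\tfrac12(z_{n_k}+z_{m_k})\in K$ by convexity of $K$. Applying uniform convexity to $a=x_j-z_{n_k}$ and $b=x_j-z_{m_k}$, which satisfy $\|a\|,\|b\|<R:=\rho+\eta$, $\|a-b\|=\|z_{m_k}-z_{n_k}\|\ge\epsilon$, and $\tfrac12(a+b)=x_j-w_k$, I rescale by $R$ to obtain $\|x_j-w_k\|\le R\bigl(1-\delta(\epsilon/R)\bigr)$ for all $j\ge N_k$, where $\delta$ denotes the modulus of convexity (positive on $(0,2]$ and non-decreasing). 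Taking $\limsup_j$ gives $r(w_k)\le(\rho+\eta)\bigl(1-\delta(\epsilon/(\rho+\eta))\bigr)$.

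When $\rho>0$, positivity and monotonicity of $\delta$ make the right-hand side strictly less than $\rho$ once $\eta$ is small enough, contradicting $r(w_k)\ge\rho$; hence $\{z_k\}$ is Cauchy. The borderline case $\rho=0$ is handled more simply and without uniform convexity: the triangle inequality yields $\|z_k-z_l\|\le r(z_k)+r(z_l)\to 0$, so $\{z_k\}$ is Cauchy here too. With the claim established the lemma follows quickly. Since $K$ is closed in the Banach space $X$ it is complete, so a minimizing sequence converges strongly to some $z\in K$, and continuity of $r$ forces $r(z)=\rho$, giving existence. For uniqueness, if $z$ and $z'$ both attain $\rho$, then the alternating sequence $z,z',z,z',\dots$ is minimizing, hence Cauchy, which forces $z=z'$. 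Finally, any minimizing sequence is Cauchy with limit a minimizer, necessarily equal to $z$, so it converges strongly to $z$.

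I expect the main obstacle to be the midpoint estimate in the second paragraph: one must rescale correctly to the unit ball in order to invoke $\delta$, legitimately pass the uniform-convexity bound through the $\limsup_j$, and then combine positivity and monotonicity of $\delta$ with a judicious choice of $\eta$ to push $r(w_k)$ strictly below $\rho$. Some care is also needed so that the scheme degrades gracefully at $\rho=0$, where $\delta(\epsilon/\rho)$ is undefined and the separate triangle-inequality argument must take over.
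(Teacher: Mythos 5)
Your proof is correct: the scaled uniform-convexity estimate for the midpoint of two near-minimizers, the separate triangle-inequality treatment of the case $\rho=0$, and the derivation of existence, uniqueness, and convergence all from the single claim that minimizing sequences are Cauchy are all sound. The paper itself gives no proof of this lemma --- it is quoted from Alfuraidan--Khamsi and Edelstein --- and your argument is essentially the classical asymptotic-center construction found in those sources, so there is no divergence of approach to report.
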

  For the given $\{x_{n}\}$, the real number $\rho=\inf\{r(y): y \in K\}$ is known as the asymptotic radius \cite{bose} in the set $K$. The notation $C$ denotes the set $\{p \in K:r(p)=\rho\}$. It is worth to note that $C$ contains single element when $X$ is uniformly convex and in that case $C$ is renowned as the asymptotic center \cite{bose} corresponding to given $\{x_{n}\}$ and the set $K$. We now recall the below stated lemma regarding the asymptotic center.

\begin{lemma}\cite{bose}\label{lemma2}
Let a Banach space $(X,||.||)$ be uniformly convex and endowed with weakly continuous duality map. Let $K\subseteq X$ be a non-empty closed convex and bounded. For any $\{y_{n}\}$ in $K$, the asymptotic center refer to $\{y_{n}\}$ and the set $K$ will be $y^{*}$ if $\{y_{n}\}$ converges weakly to $y^{*}$.
\end{lemma}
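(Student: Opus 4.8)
The plan is to use the defining property of the weakly continuous duality map to show that the weak limit $y^{*}$ minimizes the displacement functional $r(y)=\limsup_{n}\|y_{n}-y\|$ over $K$, and then to invoke Lemma~\ref{lemma1} to conclude that $y^{*}$ is precisely the unique asymptotic center. First I would record that $y^{*}\in K$: since $K$ is closed and convex it is weakly closed, so $y_{n}\rightharpoonup y^{*}$ with $y_{n}\in K$ forces $y^{*}\in K$; boundedness of $K$ makes $\{y_{n}\}$ bounded, so $r$ is finite-valued and Lemma~\ref{lemma1} is applicable.

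The analytic heart is as follows. Let $\mu$ be the gauge associated with the (single-valued) duality map $J=J_{\mu}$, and set $\Phi(t)=\int_{0}^{t}\mu(s)\,ds$, a continuous strictly increasing function for which $x\mapsto\Phi(\|x\|)$ is convex with subdifferential $J_{\mu}$. The subgradient inequality then reads
$$\Phi(\|a+b\|)\le\Phi(\|a\|)+\langle b,\,J(a+b)\rangle\qquad\text{for all }a,b\in X.$$
I would apply this with $a=y_{n}-y$ and $a+b=y_{n}-y^{*}$, i.e. $b=y-y^{*}$, obtaining, for every fixed $y\in K$ and every $n$,
$$\Phi(\|y_{n}-y^{*}\|)\le\Phi(\|y_{n}-y\|)+\langle y-y^{*},\,J(y_{n}-y^{*})\rangle.$$

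Next I would exploit the weak continuity of the duality map. Since $y_{n}\rightharpoonup y^{*}$ we have $y_{n}-y^{*}\rightharpoonup 0$, hence $J(y_{n}-y^{*})\rightharpoonup^{*}J(0)=0$, so the cross term $\langle y-y^{*},\,J(y_{n}-y^{*})\rangle$ tends to $0$. Taking $\limsup_{n}$ in the displayed inequality and using that $\Phi$ is continuous and increasing on the bounded range of $\{\|y_{n}-\cdot\|\}$, so that it commutes with $\limsup$, I obtain
$$\Phi\big(r(y^{*})\big)=\limsup_{n}\Phi(\|y_{n}-y^{*}\|)\le\limsup_{n}\Phi(\|y_{n}-y\|)=\Phi\big(r(y)\big),$$
whence $r(y^{*})\le r(y)$ because $\Phi$ is strictly increasing. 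As $y\in K$ is arbitrary, $y^{*}$ attains $\inf_{y\in K}r(y)=\rho$, so $y^{*}\in C$; by the uniqueness part of Lemma~\ref{lemma1} the asymptotic center is a single point, and therefore it must equal $y^{*}$.

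The step I expect to require the most care is the vanishing of the cross term together with the passage of $\limsup$ through $\Phi$. One must be precise that ``weakly continuous duality map'' means $J_{\mu}$ is single-valued and weak-to-weak$^{*}$ sequentially continuous with $J_{\mu}(0)=0$, which is exactly what makes $\langle y-y^{*},\,J_{\mu}(y_{n}-y^{*})\rangle\to 0$; the interchange $\limsup_{n}\Phi(\cdot)=\Phi(\limsup_{n}(\cdot))$ is then routine from continuity and monotonicity of $\Phi$. The subgradient inequality itself is standard once $\Phi$ is identified as the primitive of the gauge, and it uses no uniform convexity — uniform convexity enters only through Lemma~\ref{lemma1} to guarantee that the asymptotic center is a singleton.
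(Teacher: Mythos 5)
Your argument is correct. Note that the paper does not prove this lemma at all --- it is quoted from Bose's 1978 paper with a citation and no proof --- so there is no in-paper argument to compare against; what you have written is essentially the classical Opial--Bose argument: the subgradient inequality $\Phi(\|a+b\|)\le\Phi(\|a\|)+\langle b, J_\mu(a+b)\rangle$ for the primitive $\Phi$ of the gauge, the vanishing of the cross term via weak-to-weak$^{*}$ sequential continuity of $J_\mu$ with $J_\mu(0)=0$, the interchange of $\limsup$ with the continuous increasing $\Phi$, and uniqueness of the minimizer of $r$ from Lemma~\ref{lemma1}. All of these steps are sound, and your closing remark correctly locates the only use of uniform convexity (in guaranteeing the center is a singleton).
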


\section{Main results}
Let us take a Banach space $(X,||.||)$.  We form a graph $G$ having vertices $V(G)$ same as $X$, whereas $\triangle=\{(y,y):y \in X\}$ contained in the collection of all edges $E(G)$. The notion for asymptotically $G$-nonexpansive map is introduced below.
\begin{definition}
Let $(X,||.||)$ be endowed with the graph $G$ and $K$ be a non-empty subset of $X$. We say $T:K\rightarrow K$ asymptotically G-nonexpansive if
for any $x,y$ in $K$ with $(x,y)\in E(G)$,
$$(T(x),T(y)) \in E(G)\quad and~||T^{n}(x)-T^{n}(y)|| \leq \alpha_{n}||x-y||\quad \forall n,$$
where the real sequence $\{\alpha_{n}\}$ fulfills $\lim_{n \rightarrow \infty}\alpha_{n}=1$.
\end{definition}

\begin{remark}
Each monotone asymptotically nonexpansive map is asymptotically $G$-nonexpansive for the selected graph $G$, in which $E(G)$ coincides with the set $\{(x,y)\in X\times X : x\preceq y\,\,\textrm{or}\,\, y\preceq x\}$.
\end{remark}
\begin{remark}
 Each asymptotically nonexpansive map acts as asymptotically $G$-nonexpansive for the selected graph $G$, in which $E(G)$ matches with the set $\{(x,y): x \in X ~\textrm{and}~ y\in X\}$. We observe that the converse is not happened always and the example given below illustrates that.
\end{remark}

\begin{example}\rm{
Let $X=(l^2,||.||_2)$ and $K\subseteq l^2$ be constructed as $K=\{x \in l^2:||x||_2\leq \frac{1}{2}\}\cup\{(1,0,0,\cdots,0,\cdots)\}$. We form $f:K\rightarrow K$ as
$$f(x_1,x_2,x_3,\cdots)=(0,x_1^2,b_2x_2,b_3x_3,b_4x_4,\cdots),$$
 in which for any $n\geq 2$, $0<b_n<1$  and $\prod_{n=2}^\infty b_n=1$. Let us construct a graph
 $G$, where $V(G)=l^2$ and $E(G)=\{(x,y)\in l^2\times l^2: ||x-y||_2<\frac{1}{2}\}$. Let $x,y$ in $K$ with $(x,y)\in E(G)$. Then both $x,y \in \{x \in l^2:||x||_2\leq \frac{1}{2}\}$ and we can verify that $||f(x)-f(y)||_2\leq ||x-y||_2$, that is, $(f(x),f(y))\in E(G)$. Moreover, $||f^i(x)-f^i(y)||_2\leq ||x+y||_2\prod_{n=2}^i b_n ||x-y||_2\leq \alpha_i||x-y||_2$, where $\alpha_i=\prod_{n=2}^i b_n$. Thus the map $f$ becomes asymptotically $G$-nonexpansive map for the constructed graph. Here $f$ is not asymptotically nonexpansive. Indeed, for $x \in K$ with $||x||_2\leq \frac{1}{2}$ and $y=(1,0,0,\cdots,0,\cdots)$,
 $$||f^i(x)-f^i(y)||_2\leq ||x+y||_2\prod_{n=2}^i b_n ||x-y||_2\leq \alpha_i||x-y||_2,$$
 where $\alpha_i=\frac{3}{2}\prod_{n=2}^i b_n$ and $\lim_{i \rightarrow \infty}\alpha_i\neq 1$.
 }
\end{example}
%
%
%

The main result in this section is stated below for giving the convergence of successive iterations for asymptotically $G$-nonexpansive map weakly towards a fixed point.

  \begin{theorem}\label{Th1_main}
  Let a uniformly convex Banach space $X$ consists with a weakly continuous duality mapping. Let $X$ be endowed with the above defined graph $G$. Suppose $T:K\rightarrow K$ is a asymptotically $G$-nonexpansive map where $K\subseteq X$ is non-empty closed convex and bounded. Assume that
\begin{center}
 for each $\{y_{n}\}$  in $K$, if $(y_{n},y_{n+1})\in E(G)$ for all $n \in \mathbb{N}$ and ~$y_{n_{k}} \rightarrow y^{*}$~weakly,
~then~$(y_{n_{k}},y^{*})\in E(G)~\forall\,k$.
\end{center}
Then for $x$ in $K$, the iteration $\{T^{n}(x)\}_{n}$ will converge towards a fixed point weakly when $T\subseteq E(G)$ and $T$ is asymptotically regular.
\end{theorem}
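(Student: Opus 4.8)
My plan is to adapt the asymptotic-center machinery underlying Bose's Theorem \ref{theorembose} to the graph setting, using the edge-preservation of $T$ together with the displayed weak-closure hypothesis (call it property (P)) to substitute edge-restricted estimates for global nonexpansiveness. Fix $x \in K$ and write $x_n = T^n x$; boundedness of $K$ makes $\{x_n\}$ bounded, and since the hypothesis $T \subseteq E(G)$ (the graph of $T$ lies in $E(G)$) gives $(x,Tx)\in E(G)$ while $T$ preserves edges, an induction yields $(x_n,x_{n+1})\in E(G)$ for every $n$. Uniform convexity forces reflexivity, so I would extract a weakly convergent subsequence $x_{n_k}\rightharpoonup u$, with $u\in K$ because the closed convex set $K$ is weakly closed. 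Property (P) then supplies the crucial edges $(x_{n_k},u)\in E(G)$ for all $k$.

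The heart of the argument is to show that this weak limit $u$ is a fixed point. Consider $r_1(y)=\limsup_k\|x_{n_k}-y\|$; by Lemma \ref{lemma2} its unique minimizer over $K$ is exactly $u$, with minimum value $\rho_1=r_1(u)$. For each $m$ I would use the edge $(x_{n_k},u)$ and asymptotic $G$-nonexpansiveness to get $\|x_{n_k+m}-T^m u\|\le \alpha_m\|x_{n_k}-u\|$, while asymptotic regularity gives $\|x_{n_k}-x_{n_k+m}\|\to 0$ as $k\to\infty$; the triangle inequality then yields $r_1(T^m u)\le \alpha_m\rho_1$. Since $\rho_1$ is the minimum, $\rho_1\le r_1(T^m u)\le \alpha_m\rho_1\to\rho_1$, so $\{T^m u\}$ is a minimizing sequence for $r_1$ and Lemma \ref{lemma1} forces $T^m u\to u$ strongly. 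Finally, the iterates $\{T^m u\}$ are edge-connected and converge to $u$, so property (P) gives $(T^m u,u)\in E(G)$; edge-restricted Lipschitz continuity then yields $\|T^{m+1}u-Tu\|\le \alpha_1\|T^m u-u\|\to 0$, and since $T^{m+1}u\to u$ as well, uniqueness of limits gives $Tu=u$.

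With $u$ a fixed point I would promote the subsequential edges to the whole tail: applying $T^j$ to $(x_{n_1},u)\in E(G)$ and using $T^j u=u$ shows $(x_n,u)\in E(G)$ for all $n\ge n_1$. Hence $d_{n+j}:=\|x_{n+j}-u\|=\|T^j x_n-T^j u\|\le \alpha_j d_n$, and a $\limsup$/$\liminf$ comparison exploiting $\alpha_j\to 1$ (choosing indices along which $d_n\to\liminf_n d_n$) shows that $\lim_n\|x_n-u\|$ exists. The last step is uniqueness of the weak limit: the weakly continuous duality map endows $X$ with Opial's property, so if some other subsequence satisfied $x_{m_j}\rightharpoonup v$ with $v\ne u$, then $v$ would likewise be a fixed point with $\lim_n\|x_n-v\|$ existing, and Opial's inequality applied along each subsequence would force both $\lim_n\|x_n-u\|<\lim_n\|x_n-v\|$ and the reverse, a contradiction. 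Thus every weak subsequential limit equals $u$, and a bounded sequence in a reflexive space with a single weak cluster point converges weakly, so $\{T^n x\}$ converges weakly to the fixed point $u$.

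I expect the fixed-point step to be the main obstacle: because $T$ is only asymptotically nonexpansive and only along edges, neither a direct nonexpansive estimate nor plain continuity is at hand, so the argument must route through Lemmas \ref{lemma1}--\ref{lemma2} to obtain the strong convergence $T^m u\to u$, and must invoke property (P) twice — once to produce the edges $(x_{n_k},u)$ feeding the estimate, and once more to recover enough continuity via $(T^m u,u)\in E(G)$ to pass from $T^m u\to u$ to $Tu=u$. Checking that property (P) genuinely applies to the edge-connected sequence $\{T^m u\}$, and the bookkeeping that propagates edges from the subsequence to the full tail, are the places where the graph hypotheses must be handled most carefully.
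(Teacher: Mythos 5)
Your proposal is correct, and its core --- propagating edges along the orbit from $T\subseteq E(G)$, extracting a weak cluster point $u$ by reflexivity, using property (P) to get the edges $(x_{n_k},u)$, combining asymptotic regularity with the edge-restricted estimate to show $\{T^m u\}$ is a minimizing sequence for $r_1$, invoking Lemmas \ref{lemma1}--\ref{lemma2} to get $T^m u\to u$ strongly, and then reapplying property (P) to the edge-connected sequence $\{T^m u\}$ to recover enough Lipschitz control to conclude $Tu=u$ --- is exactly the paper's argument. The only genuine divergence is the endgame. The paper finishes by showing that the fixed point $\bar{x}$ is the asymptotic center of the \emph{full} sequence $\{T^n(x)\}$ (via the estimate $\|\bar{x}-T^{n_N+j}(x)\|\le k_j(\rho+\epsilon/2)$), so that uniqueness of the asymptotic center in a uniformly convex space forces all weak cluster points to coincide. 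You instead promote the edge $(x_{n_1},u)$ to the whole tail, deduce that $\lim_n\|x_n-u\|$ exists (your $\limsup/\liminf$ comparison using $d_{n+j}\le\alpha_j d_n$ and $\alpha_j\to 1$ is sound), and then rule out a second weak cluster point by Opial's condition, which a weakly sequentially continuous duality map does imply. The two endgames are essentially equivalent in content --- the paper's Lemma \ref{lemma2} is itself a repackaging of the Opial-type property of such spaces --- but yours makes the classical ``limit exists plus Opial'' structure explicit, at the cost of asserting without proof the implication from weak continuity of the duality map to Opial's condition; the paper's version stays entirely inside the asymptotic-center framework already set up by Lemmas \ref{lemma1} and \ref{lemma2}. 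Either route closes the proof.
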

\begin{proof}
Assume $x \in K$ is a point and define the iterations $\{T^{n}(x)\}_n$. Since $T\subseteq E(G)$, it occurs $(x,T(x))$ lies in $E(G)$.  Because of $(x,T(x))$ lies in $E(G)$ and $T$ is asymptotically $G$-nonexpansive, the pair $(T(x),T^{2}(x))$ lies in  $E(G)$. Similarly, we can visualize that for any $n$, the pair $(T^{n-1}(x),T^{n}(x))$ lies in $E(G)$.
  We will now show that each weak accumulation points of $\{T^{n}(x)\}_{n}$ becomes eventually fixed point for the defined map.

  Suppose that the subsequence $\{T^{n_{k}}(x)\}_{k}$ convergent weakly towards $\bar{x}$ as $k \rightarrow \infty$.
Since $\{T^{n}(x)\}_{n}$ having the property that the pair $(T^{n}(x),T^{n+1}(x))$ lies in $E(G)$ for each $n$ and $\{T^{n_{k}}(x)\}$ approaches towards $\bar{x}$ weakly, it appears $(T^{n_{k}}(x), \bar{x}) \in E(G)$ for any $k$. Let us now define a function $r:K \rightarrow [0,\infty)$ by
  $$r(y)=\limsup_{k}||T^{n_{k}}(x)-y||,\quad y \in K.$$
   Assume that $\rho=\inf\{r(y): y \in K\}$ and $C_{0}=\{z \in K: r(z)=\rho\}$, the asymptotic radius and the asymptotic center, respectively corresponding to $\{T^{n_{k}}(x)\}$ in the set $K$.
  Then according to Lemma \ref{lemma1} and Lemma \ref{lemma2}, we can see that $\bar{x} \in C_{0}$.
For every $p \geq 1$,
   \begin{eqnarray*}
  r(T^{p}(\bar{x}))&=&\limsup_{k}||T^{n_{k}}(x)-T^{p}(\bar{x})||\\
  &\leq &\limsup_{k}||T^{n_{k}+p}(x)-T^{p}(\bar{x})||\quad [\textrm{$\because~T$~is~asymptotic~regular}].
  \end{eqnarray*}
  Since $T$~is~asymptotically $G$-nonexpansive~and~$(T^{n_{k}}(x),\bar{x})\in E(G)~\forall \,k$, we get from the preceding inequality that
  \begin{eqnarray*}
 r(T^{p}(\bar{x}))&\leq & k_{p} \limsup_{k}||T^{n_{k}}(x)-\bar{x}||\\
   &\leq & k_{p} r(\bar{x}).
  \end{eqnarray*}
  For any $p \in \mathbb{N}$, it appears
  $$r(\bar{x})\leq r(T^{p}(\bar{x})) \leq k_{p} r(\bar{x}),$$
  considering that $T^{p}(\bar{x}) \in K$ for each $p$. Since $\lim_{p \rightarrow \infty}k_{p}=1$, we get from the preceding inequality
  $$r(\bar{x})\leq \lim_{p \rightarrow \infty}r(T^{p}(\bar{x})) \leq r(\bar{x}).$$
  Therefore $\lim_{p \rightarrow \infty}r(T^{p}(\bar{x})) =r(\bar{x})=\rho$, that is, $\{T^{p}(\bar{x})\}_p$ is a minimizing sequence of the function $r$. According to Lemma \ref{lemma1}, $\{T^{p}(\bar{x})\}_p$
  converge towards $\bar{x}$ strongly.

   Because of $T\subseteq E(G)$ and $T$ is asymptotically $G$-nonexpansive, the pair $(T^{p}(\bar{x}),T^{p+1}(\bar{x}))$ lies in $E(G)$ where $p\geq 1$. Hence and $\lim_{p \rightarrow \infty}T^{p}(\bar{x})= \bar{x}$ jointly imply that for any $p$, the pair $(T^{p}(\bar{x}), \bar{x})$ lies in the set $E(G)$. Hence
  \begin{eqnarray*}
  ||T(\bar{x})-\bar{x}||
   \leq  \alpha_{1}||\bar{x}-T^{p}(\bar{x})||+||T^{p+1}(\bar{x})-\bar{x}||,
  \end{eqnarray*}
  for each $p \in \mathbb{N}$. Therefore it appears $T(\bar{x})=\bar{x}$.

     We will demonstrate at the last part of the proof that $\bar{x}$ eventually becomes the asymptotic center corresponding to $\{T^{n}(x)\}_n$ in the set $K$.
     Suppose that the asymptotic radius of $\{T^{n}(x)\}_n$ in $K$ is $\rho'$ and apparently we can see $\rho'\geq\rho$.
   Let $\epsilon >0$ be arbitrary. For the chosen $\epsilon$, a positive integer $N$ occurs fulfilling
  $$|| \bar{x}-T^{n_{N}}(x)||\leq \rho+\epsilon/2.$$
  Since $T(\bar{x})=\bar{x}$, it appears for $j \geq 1$,
   $$|| \bar{x}-T^{n_{N}+j}(x)||=||T^{j}(\bar{x})-T^{n_{N}+j}(x)||.$$
   Because of $(T^{n_{N}}(x),\bar{x})$ lies in  $E(G)$ and $T$ is asymptotically $G$-nonexpansive, it occurs from the above equations that for $j \geq 1$,
    $$|| \bar{x}-T^{n_{N}+j}(x)||=||T^{j}(\bar{x})-T^{n_{N}+j}(x)||\leq k_{j}|| \bar{x}-T^{n_{N}}(x)||\leq k_{j}(\rho+\epsilon/2).$$
    Since $k_{j} \rightarrow 1$ as $j \rightarrow \infty$, there is positive integer $J$ satisfying
    $$|| \bar{x}-T^{n_{N}+j}(x)|| \leq k_{j}(\rho+\epsilon/2) \leq \rho+\epsilon \leq \rho'+\epsilon \qquad \forall j \geq J.$$
    Hence it appears $\limsup_{n}||\bar{x}-T^{n}(x)||=\rho'$. Thus the weak accumulation point $\bar{x}$ of $\{T^{n_{k}}(x)\}_{k}$ eventually becomes a fixed point for the defined map and it coincides with the asymptotic center refer to $\{T^{n}(x)\}_n$ and the set $K$. As $\{T^{n_{k}}(x)\}_{k}$ is an arbitrary subsequence, the proof follows.
  \end{proof}
  \begin{remark}
  We can view that this theorem subsumes Theorem \ref{theorembose} proved by Bose \cite{bose} for giving the weak convergence of iterations of asymptotically nonexpansive mappings towards a fixed point if we select a graph $G$ having $E(G)=X \times X$.
  \end{remark}


We are now stating a different version of the preceding Theorem \ref{Th1_main} where we have added the continuity condition on the defined map.
\begin{theorem}\label{new theorem}
Let a uniformly convex Banach space $X$ consists with a weakly continuous duality mapping. Let $X$ be endowed with the above defined graph $G$.
 Let $T:K\rightarrow K$ be a asymptotically $G$-nonexpansive, continuous and asymptotically regular map where $K\subseteq X$ is non-empty closed convex and bounded. Suppose there exist $L \in \mathbb{N}$ and $x_0 \in K$ fulfilling the conditions that
\begin{enumerate}[(a)]
\item $T(x_0) \in [x_{0}]_{G}^{L}$; \label{it 1 Th 1}
\item for the sequence $\{T^{n}(x_0)\}$ in $K$, if $T^{n}(x_0)\in [T^{n-1}(x_0)]_{G}^{L}~\forall\,n \in \mathbb{N}$ and $T^{n_{k}}(x_0) \rightarrow y^{*}$~weakly, ~then~$(T^{n_{k}}(x_0),y^{*})\in E(G)~\forall \,k$. \label{it 2 Th 1}
\end{enumerate}
Then $\{T^{n}(x_0)\}_{n}$ will converge weakly towards a point $x^{*}$ where $x^{*}=T(x^{*})$.
\end{theorem}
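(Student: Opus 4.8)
The plan is to follow the architecture of the proof of Theorem \ref{Th1_main}, but to replace the structural hypothesis $T\subseteq E(G)$ by the path hypothesis \eqref{it 1 Th 1} together with the continuity of $T$, the latter taking over the role played by edge-preservation in the concluding fixed-point step.

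First I would unpack the notation $[x_0]_G^L$: it records the existence of a path $x_0=z_0,z_1,\dots,z_L=T(x_0)$ with $(z_{i-1},z_i)\in E(G)$ for $1\le i\le L$. Since $T$ is asymptotically $G$-nonexpansive it preserves edges, so applying $T$ to each consecutive pair produces a path of the same length $L$ from $T(x_0)$ to $T^2(x_0)$; hence $T^2(x_0)\in[T(x_0)]_G^L$. Transporting the length-$L$ path along the whole orbit by repeated edge-preservation gives $T^n(x_0)\in[T^{n-1}(x_0)]_G^L$ for every $n$. This is exactly what makes hypothesis \eqref{it 2 Th 1} applicable, and it is the conceptual link between the two conditions. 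Then, picking any weakly convergent subsequence $T^{n_k}(x_0)\to\bar{x}$ weakly (such a subsequence exists because $K$ is bounded and $X$, being uniformly convex, is reflexive, while $K$ is weakly closed), hypothesis \eqref{it 2 Th 1} yields $(T^{n_k}(x_0),\bar{x})\in E(G)$ for all $k$.

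Next I would define $r(y)=\limsup_k\|T^{n_k}(x_0)-y\|$ on $K$. By Lemma \ref{lemma1} it has a unique minimizer, and by Lemma \ref{lemma2} that minimizer is the weak limit $\bar{x}$, so $\bar{x}$ is the asymptotic center of $\{T^{n_k}(x_0)\}$ in $K$, with minimal value $\rho$. Using asymptotic regularity to replace $T^{n_k}(x_0)$ by $T^{n_k+p}(x_0)$ inside $r$, and then the $G$-nonexpansive inequality along the edge $(T^{n_k}(x_0),\bar{x})$, I obtain $r(T^p(\bar{x}))\le\alpha_p\,r(\bar{x})$, whence $r(\bar{x})\le r(T^p(\bar{x}))\le\alpha_p\,r(\bar{x})$; letting $p\to\infty$ shows $\{T^p(\bar{x})\}$ is a minimizing sequence for $r$, so Lemma \ref{lemma1} forces $T^p(\bar{x})\to\bar{x}$ strongly. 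This is where the argument genuinely departs from Theorem \ref{Th1_main}: rather than manufacturing the edge $(T^p(\bar{x}),\bar{x})$, I invoke continuity. From $T^p(\bar{x})\to\bar{x}$ and continuity of $T$ we get $T^{p+1}(\bar{x})\to T(\bar{x})$, while $T^{p+1}(\bar{x})\to\bar{x}$ as a shifted subsequence, so uniqueness of strong limits gives $T(\bar{x})=\bar{x}$.

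Finally, exactly as in Theorem \ref{Th1_main}, I would show $\bar{x}$ is also the asymptotic center of the full orbit $\{T^n(x_0)\}$, whose asymptotic radius $\rho'$ satisfies $\rho'\ge\rho$. Fixing $\epsilon>0$ and choosing $N$ with $\|\bar{x}-T^{n_N}(x_0)\|\le\rho+\epsilon/2$, I would use $T(\bar{x})=\bar{x}$ together with the $G$-nonexpansive bound along the edge $(T^{n_N}(x_0),\bar{x})$ to control $\|\bar{x}-T^{n_N+j}(x_0)\|\le\alpha_j(\rho+\epsilon/2)$ for all $j$, and then $\alpha_j\to1$ to conclude $\limsup_n\|\bar{x}-T^n(x_0)\|\le\rho'$, giving $\limsup_n\|\bar{x}-T^n(x_0)\|=\rho'$. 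Since uniform convexity makes the asymptotic center unique, every weak accumulation point of $\{T^n(x_0)\}$ coincides with this single fixed point $x^*$, and therefore the whole bounded sequence converges weakly to $x^*=T(x^*)$. I expect the main obstacle to be the bookkeeping of the first step, namely verifying that the length-$L$ path from $x_0$ to $T(x_0)$ is transported intact along the entire orbit so that \eqref{it 2 Th 1} can be triggered, since everything downstream is a faithful adaptation of the proof of Theorem \ref{Th1_main} with continuity substituted for the $T\subseteq E(G)$ device.
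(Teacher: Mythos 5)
Your proposal is correct and follows essentially the same route as the paper's proof, which simply invokes the first and last parts of the argument for Theorem \ref{Th1_main} and then uses continuity of $T$ to pass from the strong convergence $T^{p}(x^{*})\to x^{*}$ to $T(x^{*})=x^{*}$. In fact you supply more detail than the paper does, in particular the explicit verification that edge-preservation transports the length-$L$ path from $x_0$ to $T(x_0)$ along the orbit so that hypothesis (\ref{it 2 Th 1}) becomes applicable.
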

\begin{proof}
The proof will be complete if we are able to prove that every weak cluster points of $\{T^{n}(x_0)\}_n$ becomes eventually a fixed point for the defined map. Suppose $T^{n_k}(x_0) \rightarrow x^{*}$ weakly as $k \rightarrow \infty$. Now, following the same lines as used in the proof (first part) of Theorem \ref{Th1_main}, it occurs that $\{T^{p}(x^{*})\}_p$ converges strongly to $x^{*}$. Hence $T^{p+1}(x^{*}) \rightarrow T(x^{*})$ because of $T$ is continuous. Thus it appears $x^{*}=T(x^{*})$. It eventually can be shown by using the similar lines as in the last part of Theorem \ref{Th1_main} that this point $x^{*}$ coincides with the asymptotic center refer to $\{T^{n}(x_0)\}_n$ and the set $K$. As $\{T^{n_k}(x_0)\}_{k}$ is an arbitrary subsequence, the proof follows.
\end{proof}

The preceding Theorem \ref{new theorem} yields that the weak limit of iterations for monotone asymptotically nonexpansive map eventually becomes a fixed point for the defined map on Banach spaces consisting a partial ordering.
 \begin{corollary}
  Let a uniformly convex Banach space $(X,||,||,\preccurlyeq)$ be consist with a weakly continuous duality map where $\preccurlyeq$ is a partial ordering on $X$.  Let $T:K\rightarrow K$ be monotone asymptotically nonexpansive, continuous and asymptotically regular map where $K\subseteq X$ is non-empty closed convex and bounded. Let $x_{0} \in K$ be such that $x_{0}\preccurlyeq T(x_0)$ and
\begin{center}
 for the sequence $\{T^{n}(x_0)\}$, if $T^{n}(x_0)\preccurlyeq T^{n+1}(x_0)$ $\forall \,n \in \mathbb{N}$ and $T^{n_{k}}(x_0) \rightarrow x^{*}$~weakly,
~then~$T^{n_{k}}(x_0)\preccurlyeq x^{*}~\forall\,k$.
\end{center}
Then $\{T^{n}(x_0)\}_{n}$ will converge weakly towards a point $x^{*}$ where $x^{*}=T(x^{*})$.
\end{corollary}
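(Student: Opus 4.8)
The plan is to obtain this corollary as a direct specialization of Theorem \ref{new theorem}, taking for $G$ the graph induced by the partial order. First I would set $V(G)=X$ and
$$E(G)=\{(x,y)\in X\times X : x\preccurlyeq y \text{ or } y\preccurlyeq x\},$$
which is exactly the graph appearing in the first Remark of this section. Because $T$ is monotone asymptotically nonexpansive, that Remark already certifies that $T$ is asymptotically $G$-nonexpansive for this particular $G$, so the defining inequality of Theorem \ref{new theorem} is met. The uniform convexity of $X$, the weakly continuous duality mapping, the closedness, convexity and boundedness of $K$, and the continuity and asymptotic regularity of $T$ are stated identically in the corollary and in Theorem \ref{new theorem}, so these transfer verbatim and require no further argument.

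Next I would verify the two structural hypotheses of Theorem \ref{new theorem} with the choice $L=1$. For condition (\ref{it 1 Th 1}), the standing assumption $x_{0}\preccurlyeq T(x_0)$ gives $(x_{0},T(x_0))\in E(G)$, which is precisely $T(x_0)\in[x_{0}]_{G}^{1}$. For condition (\ref{it 2 Th 1}), the premise $T^{n}(x_0)\preccurlyeq T^{n+1}(x_0)$ for all $n$ means $(T^{n}(x_0),T^{n+1}(x_0))\in E(G)$, i.e. $T^{n}(x_0)\in[T^{n-1}(x_0)]_{G}^{1}$ after a shift of index; and the order-theoretic conclusion $T^{n_{k}}(x_0)\preccurlyeq x^{*}$ immediately yields $(T^{n_{k}}(x_0),x^{*})\in E(G)$ for every $k$. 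Hence each clause of (\ref{it 2 Th 1}) is furnished by the corollary's hypothesis on $\{T^{n}(x_0)\}$.

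With both conditions in force, Theorem \ref{new theorem} applies and delivers a point $x^{*}$ with $T^{n}(x_0)\to x^{*}$ weakly and $x^{*}=T(x^{*})$, which is exactly the assertion of the corollary. The step demanding the most care, rather than any genuine analytic difficulty, is the translation dictionary between the order relation $\preccurlyeq$ and the symmetric edge set $E(G)$: I would check that the one-directional chain $x_{0}\preccurlyeq T(x_0)$, $T^{n}(x_0)\preccurlyeq T^{n+1}(x_0)$ is enough to populate the edges actually used, and that $L=1$ is the correct path length so that $[x]_{G}^{L}$ reduces to single-edge adjacency. Once this bookkeeping is fixed, no new estimate is needed and the corollary is an immediate instance of the theorem.
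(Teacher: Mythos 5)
Your proposal is correct and is exactly the route the paper intends: the corollary is the specialization of Theorem \ref{new theorem} to the order-induced graph $E(G)=\{(x,y): x\preccurlyeq y \text{ or } y\preccurlyeq x\}$ with $L=1$, using Remark 3.1 to see that monotone asymptotically nonexpansive maps are asymptotically $G$-nonexpansive. The only point worth making explicit in your ``translation dictionary'' is that monotonicity of $T$ together with $x_0\preccurlyeq T(x_0)$ forces the one-directional chain $T^{n}(x_0)\preccurlyeq T^{n+1}(x_0)$ for all $n$, which is why the corollary's one-sided hypothesis suffices to supply condition (\ref{it 2 Th 1}) along the actual orbit, the only place the theorem's proof uses it.
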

\section{Applications to locally nonexpansive maps}
We are now presenting the stated below result for maps fulfilling the nonexpansive condition uniformly locally by applying preceding Theorem \ref{new theorem} for asymptotically $G$-nonexpansive maps. Edelstein \cite{E5} initiated the concept of uniformly local contractions and derived a significant theorem on fixed points for local contractions. A unique proof for that result using graph structure has been provided in recent times by Jachymski \cite{Jach}.
 \begin{theorem}
 Let a uniformly convex Banach space $(X,||.||)$ consists with a weakly continuous duality map. Suppose $K \subseteq X$ is a non-empty closed convex bounded set and $T:K\rightarrow K$ is a asymptotically regular map so that for some fixed $\varepsilon>0$,
\begin{equation}
||T(x)-T(y)||\leq ||x-y||\quad \textrm{for}~x,y\in K~\textrm{with}~||x-y||<\varepsilon , \label{eq new corollary}
\end{equation}
 Then for $x_{0}$ in $K$, the iteration $\{T^{n}(x_0)\}_{n}$ will converge towards a fixed point weakly when the asymptotic radius $\rho(T^{n}(x_0))<\varepsilon$.
\end{theorem}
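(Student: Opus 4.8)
The plan is to realize $T$ as an asymptotically $G$-nonexpansive map for a suitably chosen graph and then invoke Theorem \ref{new theorem}. First I would define the graph $G$ on $V(G)=X$ by
$$E(G)=\{(x,y)\in X\times X:\ ||x-y||<\varepsilon\},$$
so that $\triangle\subseteq E(G)$. If $(x,y)\in E(G)$ with $x,y\in K$, then the local estimate \eqref{eq new corollary} gives $||T(x)-T(y)||\le ||x-y||<\varepsilon$, whence $(T(x),T(y))\in E(G)$; iterating this observation yields $||T^{n}(x)-T^{n}(y)||\le ||x-y||$ for every $n$. Thus $T$ is asymptotically $G$-nonexpansive with $\alpha_{n}\equiv 1$. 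The same local estimate shows $T$ is continuous, and $T$ is asymptotically regular by hypothesis, so all the standing assumptions of Theorem \ref{new theorem} are in force.

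Next I would verify the path hypothesis (a) together with the antecedent of (b). Since $K$ is bounded and convex, put $d=\sup\{||u-v||:u,v\in K\}<\infty$ and fix $L\in\mathbb{N}$ with $L>d/\varepsilon$. For arbitrary $u,v\in K$ the points $u_{i}=u+\tfrac{i}{L}(v-u)$ for $0\le i\le L$ lie in $K$ by convexity and satisfy $||u_{i-1}-u_{i}||=||u-v||/L\le d/L<\varepsilon$, so $(u_{i-1},u_{i})\in E(G)$ and hence $v\in[u]_{G}^{L}$. Taking $u=x_{0},\,v=T(x_{0})$ gives condition (a), and taking $u=T^{n-1}(x_{0}),\,v=T^{n}(x_{0})$ gives $T^{n}(x_{0})\in[T^{n-1}(x_{0})]_{G}^{L}$ for every $n$. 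This is precisely the reason the $L$-path formulation of Theorem \ref{new theorem} is needed rather than the single-edge Theorem \ref{Th1_main}: consecutive iterates need not be within $\varepsilon$ of one another for small $n$, but can always be joined by a length-$L$ path through the convex set $K$.

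The delicate step — and where the hypothesis $\rho(T^{n}(x_{0}))<\varepsilon$ enters — is the conclusion of (b). Suppose $T^{n_{k}}(x_{0})\to y^{*}$ weakly. By Lemma \ref{lemma2}, $y^{*}$ is the asymptotic center of $\{T^{n_{k}}(x_{0})\}$, so $\limsup_{k}||T^{n_{k}}(x_{0})-y^{*}||$ equals the asymptotic radius of this subsequence. Because a subsequence's limit superior never exceeds that of the full sequence, this radius is at most $\rho(T^{n}(x_{0}))<\varepsilon$. Consequently $||T^{n_{k}}(x_{0})-y^{*}||<\varepsilon$, i.e.\ $(T^{n_{k}}(x_{0}),y^{*})\in E(G)$, for all sufficiently large $k$; discarding the finitely many initial indices (which alters neither the weak limit nor the asymptotic center) yields a tail along which (b) holds as stated, equivalently replacing the base point $x_{0}$ by $T^{N}(x_{0})$.

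Finally, applying Theorem \ref{new theorem} to this setup gives that $\{T^{n}(x_{0})\}$ converges weakly to a point $x^{*}=T(x^{*})$, which is the desired conclusion. I expect the main obstacle to be exactly this last verification: the hypothesis controls only the \emph{limit superior} of $||T^{n_{k}}(x_{0})-y^{*}||$, so one obtains $(T^{n_{k}}(x_{0}),y^{*})\in E(G)$ eventually rather than for every $k$, and the argument must be arranged by passing to a tail so that Theorem \ref{new theorem} applies cleanly. Everything else — the nonexpansive propagation, the continuity of $T$, and the convexity construction of the length-$L$ paths — is routine.
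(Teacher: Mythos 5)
Your proposal is correct and follows essentially the same route as the paper: the same graph $E(G)=\{(x,y):\|x-y\|<\varepsilon\}$, the same convexity argument producing the length-$L$ paths needed for conditions (a) and (b) of Theorem \ref{new theorem}, and the same use of Lemma \ref{lemma2} plus $\rho(T^{n}(x_0))<\varepsilon$ to verify the conclusion of (b). Your extra care at the last step is warranted --- the bound $\limsup_k\|T^{n_k}(x_0)-y^*\|<\varepsilon$ only gives $(T^{n_k}(x_0),y^*)\in E(G)$ for large $k$, a point the paper states as ``for all $k$'' without comment --- and your tail-passing fix is harmless since the edge condition is only ever used inside a $\limsup$ over $k$.
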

\begin{proof}
We first take a graph $G$ having vertices $V(G)$ same as $X$, whereas $E(G)$ contains all $(x,y)$ in $X \times X$ satisfying $||x-y||<\varepsilon$. If we take any two arbitrary points $x,y \in X$ having $(x,y) \in E(G)$, that is, $||x-y||<\varepsilon$. Hence and by (\ref{eq new corollary}), it appears that $||T(x)-T(y)||\leq||x-y||< \varepsilon$, that is,  $(Tx,Ty) \in E(G)$. Hence $T$ becomes asymptotically $G$-nonexpansive for the defined graph. Again, we can check easily that the map becomes continuous. Thus $T$ is continuous, asymptotically $G$-nonexpansive and asymptotically regular mapping. Let $x_{0}$ in $K$ is an element having $\rho(T^{n}(x_0))<\varepsilon$. Since $K\subseteq X$ is convex and $x_0,\,T(x_0)$ belong to the set  $K$, we can find a sequence $(y_i)_{i=0}^{L}$ (for some $L\in \mathbb{N}$) in such a way that $y_0=x_0$, $y_L=T(x_0)$ and $d(y_{i-1},y_{i})<\varepsilon$ for $i=1,\cdots,L$. Therefore it appears that $T(x_0) \in [x_{0}]_{G}^{L}$ and the condition (\ref{it 1 Th 1}) of Theorem \ref{new theorem} holds. 
 As $T$ satisfies the locally nonexpansive condition (\ref{eq new corollary}) and $d(y_{i-1},y_{i})<\varepsilon$ for $i=1,\cdots,L$, it appears that there is a path $(T^{n}(y_i))_{i=0}^{L}$ in $G$ of length $L$ from $T^{n}(x_{0})$ to $T^{n+1}(x_{0})$ for each $n$. Thus $\{T^{n}(x_{0})\}_n$ fulfills the property that $T^{n}(x_{0}) \in[T^{n-1}(x_{0})]_{G}^{L}$~for $n \in \mathbb{N}$. Let us suppose that $T^{n_k}(x_{0}) \rightarrow x^{*}$ weakly. According to Lemma \ref{lemma2}, we see that  $x^{*}$ is the asymptotic center of $\{T^{n_k}(x_{0})\}$ in $K$. This together with $\rho(T^{n}(x_0))<\varepsilon$ imply that $d(T^{n_k}(x_{0}), x^{*}) <\varepsilon$ for all $k$. Thus $(T^{n_k}(x_{0}), x^{*}) \in E(G)$ for each $k$ and hence the condition (\ref{it 2 Th 1}) of Theorem \ref{new theorem} fulfills. Therefore the proof follows according to Theorem \ref{new theorem}.
\end{proof}









\end{document}